\newtheorem{n}{}[]
\theoremstyle{definition}
\newtheorem{lemma}[n]{Lemma}
\newtheorem{proposition}[n]{Proposition}
\newtheorem{theorem}[n]{Theorem}
\theoremstyle{definition}
\newtheorem{notation}[n]{Notation}
\newtheorem{remark}[n]{Remark}
\newtheorem{example}[n]{Example}
\newcommand{\EEE}{\mathcal{E}}
\newcommand{\et}{\text{\'et}}
\newcommand{\FF}{\mathbb{F}}
\newcommand{\ffff}{\mathfrak{f}}
\newcommand{\FFF}{\mathcal{F}}
\newcommand{\FFFF}{\mathfrak{F}}
\newcommand{\Fr}{\mathrm{Fr}}
\newcommand{\Gal}{\mathrm{Gal}}
\newcommand{\GL}{\mathrm{GL}}
\newcommand{\KKKK}{\mathfrak{K}}
\newcommand{\LLL}{\mathcal{L}}
\newcommand{\LLLL}{\mathfrak{L}}
\newcommand{\NN}{\mathbb{N}}
\newcommand{\NS}{\mathrm{NS}}
\newcommand{\PP}{\mathbb{P}}
\newcommand{\QQ}{\mathbb{Q}}
\newcommand{\st}{\mid}
\newcommand{\tr}{\mathrm{tr}}
\newcommand{\br}{\!\del[0]}
\let\cb\cbr\renewcommand{\cbr}{\!\cb[0]}
\let\sb\sbr\renewcommand{\sbr}{\!\sb[0]}
\title{Algebraicity of Artin--Hasse--Weil L-series over global function fields}
\author{David Kurniadi Angdinata}
\begin{document}

\maketitle

\begin{abstract}
We prove an analogue of Deligne's period conjecture for the special value of the L-function of an abelian variety over a global function field twisted by an Artin representation. We illustrate this in action for an example of an elliptic curve twisted by a Dirichlet character.
\end{abstract}

Deligne's period conjecture is an abstract statement on the special value of the L-function associated to a pure motive with a critical Hodge structure \cite[Definition 1.3]{Del79}. Specifically, he conjectures that the L-value is equal to the determinant of a certain period map between its Betti and de Rham realisations, up to non-zero multiples in a number field \cite[Conjecture 2.8]{Del79}. This is known for Artin L-functions over $ \QQ $ \cite[Proposition 6.7]{Del79}, and has ramifications for the Birch--Swinnerton-Dyer conjecture for abelian varieties over $ \QQ $ \cite[Section 4]{Del79}, with numerical evidence for L-functions associated to Jacobians of smooth projective curves over $ \QQ $ \cite[Conjecture 1.1]{ECW24}.

In the context of the L-function $ L\br{A, \tau, s} $ of an abelian variety $ A $ over a number field $ K $ twisted by an Artin representation $ \tau $ over $ K $, which appear in equivariant refinements of the Birch--Swinnerton-Dyer conjecture \cite[Conjecture 3.3]{BC24}, Deligne's period conjecture translates to a statement on the algebraicity and Galois equivariance of $ L\br{A, \tau, 1} $ normalised by periods \cite[Proposition 4.3.8]{Eva21}. This remains largely open in general, but the case of an elliptic curve over $ \QQ $ twisted by Artin representations that factor through a false Tate curve extension, such as the trivial representation and primitive Dirichlet characters, is a consequence of the modularity theorem \cite[Theorem 4.2]{BD07}.

When $ A $ is an abelian variety over a global function field $ K $, the works of Grothendieck \cite[Theorem 5.1]{Gro95} and Deligne \cite[Theorem 9.3]{Del73} show that $ L\br{A, \tau, s} $ is already a rational function satisfying a globally compatible functional equation, so that the aforementioned normalisations by periods are unnecessary. This paper presents a short proof of the analogue of Deligne's period conjecture in this context, which is stated in Theorem \ref{thm:hasseweil}. To this end, some notational conventions for the formalism of $ \ell $-adic representations over local fields and global function fields will first be established. Throughout, $ \ell $ will be a fixed prime of $ \QQ $, and $ V_\ell $ will be a finite-dimensional vector space over a finite extension of $ \QQ_\ell $, whose choice will not be essential.

\pagebreak

\begin{notation}
Let $ F $ be a non-archimedean local field with residue characteristic $ p $. Let $ I_F $ denote the inertia subgroup of its Weil group $ W_F $, and let $ \Fr_F $ denote the inverse of any choice of Frobenius element in $ W_F $. For $ \ell \ne p $, an \emph{$ \ell $-adic representation over $ F $} is a continuous homomorphism $ \rho : W_F \to \GL\br{V_\ell} $. Its \emph{Euler factor} is the inverse characteristic polynomial
$$ \LLL_F\br{\rho, T} \coloneqq \det\br{1 - T \cdot \Fr_F \st \rho^{I_F}}, $$
where $ \rho^{I_F} $ is the subrepresentation of $ \rho $ invariant under $ I_F $.

Now let $ K $ be the global function field of a smooth proper geometrically irreducible curve $ C $ of genus $ g_C $ over a finite field $ \FF_q $ with absolute Galois group $ G_K $. For each place $ v $ of $ K $, let $ K_v $ denote its completion, and let $ \deg v $ denote its residue class degree. For $ \ell \nmid q $, an \emph{$ \ell $-adic representation over $ K $} is a continuous homomorphism $ \rho : G_K \to \GL\br{V_\ell} $. Its \emph{formal L-series} is the infinite product
$$ \LLL\br{\rho, T} \coloneqq \prod_v \dfrac{1}{\LLL_{K_v}\br{\rho, T^{\deg v}}}, $$
which is a priori only a formal product. Its \emph{L-series} $ L\br{\rho, s} $ is simply $ \LLL\br{\rho, q^{-s}} $, and let $ L^{\br{n}}\br{\rho, s} $ denote the $ n $-th derivative of $ L\br{\rho, s} $ for all $ n \in \NN $. Let $ \ffff_\rho $ denote the Artin conductor of $ \rho $, and let $ \deg\ffff_\rho $ denote its degree as a Weil divisor on $ C $. Finally, let $ G_K^g $ denote the geometric Galois group, namely the kernel of the natural restriction from $ G_K $ to the absolute Galois group of $ \FF_q $, and let $ \rho^{G_K^g} $ denote the subrepresentation of $ \rho $ invariant under $ G_K^g $.

The key example of an $ \ell $-adic representation over $ K $ will be the first $ \ell $-adic cohomology group $ \rho_A \coloneqq H_\et^1\br{A, \QQ_\ell} $ of an abelian variety $ A $ over $ K $, which is independent of $ \ell $ \cite[Theorem 4.3]{GR72}, so $ \ell $ is suppressed from notation. Another example is an \emph{Artin representation}, namely a continuous homomorphism $ \tau : G_K \to \GL\br{V} $, where $ V $ is a finite-dimensional vector space over a number field equipped with the discrete topology, viewed as an $ \ell $-adic representation over $ K $ by some embedding $ \overline{\QQ} \hookrightarrow \overline{\QQ_\ell} $. The relevant notions over $ F $ are defined analogously. Finally, let $ L^{\br{n}}\br{A, \tau, s} $ denote $ L^{\br{n}}\br{\rho_A \otimes \tau, s} $ for all $ n \in \NN $.

For an Artin representation $ \tau $, let $ \QQ\br{\tau} $ denote the number field generated by the values of $ \tr\br{\tau} $, and let $ \tau^\sigma $ denote the representation with character $ \sigma \circ \tr\br{\tau} $ for any $ \sigma \in G_\QQ $. If $ \br{v_i}_i $ is a basis of $ \tau $ over $ \QQ $, and $ \br{a_{ij}} $ is the matrix of $ g \in G_K $ with respect to this basis, then $ \br{v_i^\sigma}_i $ is a basis of $ \tau^\sigma $ over $ \QQ $, and the matrix of $ g $ with respect to this basis is $ \br{a_{ij}^{\sigma}} $ \cite[Section 2.1.4]{Eva21}.
\end{notation}

The main result of this paper is as follows.

\begin{theorem}
\label{thm:hasseweil}
Let $ A $ be an abelian variety over a global function field $ K $, and let $ \tau $ be an Artin representation over $ K $. Then $ L^{\br{n}}\br{A, \tau, 1} \in \QQ\br{\tau} $ and $ L^{\br{n}}\br{A, \tau, 1}^\sigma = L^{\br{n}}\br{A, \tau^\sigma, 1} $ for any $ \sigma \in G_\QQ $, for all $ n \in \NN $.
\end{theorem}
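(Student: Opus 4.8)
The plan is to reduce the entire statement to one structural fact about the rational function $ \LLL\br{\rho_A \otimes \tau, T} $: that it lies in $ \QQ\br{\tau}\br{T} $, that it is regular at the $ \QQ $-rational point $ T = q^{-1} $, and that $ \LLL\br{\rho_A \otimes \tau, T}^\sigma = \LLL\br{\rho_A \otimes \tau^\sigma, T} $ for every $ \sigma \in G_\QQ $. Granting this, $ L^{\br{n}}\br{A, \tau, 1} $ is obtained by differentiating $ \LLL\br{\rho_A \otimes \tau, T} $ and evaluating near $ T = q^{-1} $ through the substitution $ T = q^{-s} $, so it lies in $ \QQ\br{\tau} $; and because $ \sigma $ commutes with differentiation and with evaluation at a rational point, and $ \rho_A \otimes \tau^\sigma $ is again $ \rho_A $ tensored with an Artin representation, the equivariance $ L^{\br{n}}\br{A, \tau, 1}^\sigma = L^{\br{n}}\br{A, \tau^\sigma, 1} $ follows formally. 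Regularity at $ q^{-1} $ should come from weights: a lisse sheaf attached to $ \rho_A \otimes \tau $ on a suitable open is pure of weight $ 1 $, so by Deligne's Weil~II the poles of $ \LLL\br{\rho_A \otimes \tau, T} $ have absolute value $ q^{-j/2} $ for some $ j \in \cbr{0, 1, 3} $, none of which equals $ q^{-1} $.

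To prove the structural fact I would first handle the unramified part. Choosing a dense open $ U \subseteq C $ over which both $ \rho_A $ and $ \tau $ are unramified, with associated lisse sheaf $ \FFF $, the Grothendieck--Lefschetz trace formula on $ U $ identifies $ \prod_{v \in U} \LLL_{K_v}\br{\rho_A \otimes \tau, T^{\deg v}}^{-1} $ with an alternating product of inverse characteristic polynomials of $ \Fr_q $ acting on the compactly supported cohomology of $ \FFF $ over $ \overline{\FF_q} $, hence with a rational function over a finite extension of $ \QQ_\ell $. Its power-series coefficients, however, are integral polynomials in the traces $ \tr\br{\Fr_{K_v}^k \st \rho_A \otimes \tau} = \tr\br{\Fr_{K_v}^k \st \rho_A} \cdot \tr\br{\Fr_{K_v}^k \st \tau} $, whose first factor is an integer (the characteristic polynomial of Frobenius on the abelian-variety reduction lies in $ \ZZ[T] $) and whose second factor is a value of $ \tr\br{\tau} $, hence lies in $ \QQ\br{\tau} $. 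So this partial product is a power series over $ \QQ\br{\tau} $, and a power series over a field that becomes a rational function over an extension is already rational over the field (its minimal relation is detected by Hankel determinants in the base field). Applying $ \sigma $ to the same traces and using $ \tr\br{\tau}^\sigma = \tr\br{\tau^\sigma} $ gives the $ \sigma $-equivariance of the partial product.

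The remaining ingredient is the finitely many bad Euler factors $ \det\br{1 - T \cdot \Fr_{K_v} \st (\rho_A \otimes \tau)^{I_{K_v}}} $ for $ v \notin U $. I would treat these by base change: choose a finite Galois extension $ L_w / K_v $, arising from a finite extension of $ K $ through which $ \tau $ and $ \tau^\sigma $ both factor, over which $ A $ acquires semistable reduction and $ \tau|_{G_{L_w}} $ is trivial. Then $ (\rho_A \otimes \tau)^{I_{L_w}} = \rho_A^{I_{L_w}} \otimes V_\tau $, the space $ (\rho_A \otimes \tau)^{I_{K_v}} $ is the fixed space of the finite group $ \Gamma \coloneqq I_{K_v} / I_{L_w} $, and since a lift of $ \Fr_{K_v} $ normalises $ \Gamma $ it commutes with the averaging idempotent of $ \Gamma $, whence
$$ \tr\br{\Fr_{K_v}^k \st (\rho_A \otimes \tau)^{I_{K_v}}} = \frac{1}{\abs{\Gamma}} \sum_{\gamma \in \Gamma} \tr\br{\Fr_{K_v}^k \gamma \st \rho_A^{I_{L_w}}} \cdot \tr\br{\tau}\br{\widetilde{\Fr_{K_v}^k \gamma}}. $$
Each factor of the second kind is a value of $ \tr\br{\tau} $; each factor of the first kind is rational and independent of $ \ell $, because $ \rho_A^{I_{L_w}} $ carries a $ \QQ $-structure stable under the local Weil group — this is precisely the description of the inertia invariants of $ \rho_A $ in terms of the semiabelian special fibre of the N\'eron model of $ A $ over $ \mathcal{O}_{L_w} $, i.e. Grothendieck's semistable reduction theorem. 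Newton's identities then put each bad Euler factor in $ \QQ\br{\tau}[T] $, and the same averaging identity, run with $ \tau^\sigma $ (still trivial over $ L_w $), gives its $ \sigma $-equivariance. Multiplying the unramified part by the inverses of the bad factors then produces $ \LLL\br{\rho_A \otimes \tau, T} $ with all three required properties.

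The step I expect to be the main obstacle is thus the last one, and inside it the $ \QQ $-rationality and $ \ell $-independence of the local Weil-group module $ \rho_A^{I_{L_w}} $: the inertia invariants of a tensor product are strictly larger than the tensor product of the inertia invariants, so the bad factors cannot be read off from Frobenius traces on $ \rho_A \otimes \tau $ alone, and one is forced to input the structure of the N\'eron model at the bad fibres. (One could instead phrase the whole argument through the middle-extension sheaf on all of $ C $ and the trace formula there, but the arithmetic content concentrated at the bad fibres is the same.)
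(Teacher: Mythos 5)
Your proposal is correct in outline and reaches the same structural statement as the paper --- that $ \LLL\br{\rho_A \otimes \tau, T} $ lies in $ \QQ\br{\tau}\br{T} $ and satisfies $ \LLL\br{\rho_A \otimes \tau, T}^\sigma = \LLL\br{\rho_A \otimes \tau^\sigma, T} $ --- and it uses the same two global ingredients: rationality over $ \overline{\QQ_\ell} $ via Grothendieck--Lefschetz, and descent of rationality to $ \QQ\br{\tau} $ via Hankel determinants. Where you genuinely diverge is in the local analysis. The paper proves a single local statement (Proposition \ref{prop:local}) treating \emph{all} places uniformly: assuming only that the untwisted Euler factor has rational coefficients, it passes to the associated Weil--Deligne representation, changes the auxiliary prime $ \ell $ to one with prescribed splitting in the field realising $ \tau $, and applies the Galois automorphism directly to the matrix of Frobenius on the inertia invariants. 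You instead split into good places, where $ \tr\br{\Fr^k \st \rho_A \otimes \tau} $ factors as a product of an integer and a character value, and bad places, where you pass to a semistable trivialising extension $ L_w $ and average over $ \Gamma = I_{K_v}/I_{L_w} $. Your route is more hands-on and makes the arithmetic input at the bad fibres explicit; the paper's route is softer and immediately yields the more general Theorem \ref{thm:global} (hence also Theorem \ref{thm:artin}). You also supply something the paper leaves implicit: regularity of $ \LLL\br{\rho_A \otimes \tau, T} $ at $ T = q^{-1} $ via weights (the paper only remarks that evaluation is legitimate ``whenever the denominator does not vanish''); your weight bookkeeping is slightly off ($ H^0 $ and $ H^2 $ of the middle extension are pure of weights $ 1 $ and $ 3 $, so the poles sit at $ \abs{T} = q^{-1/2} $ and $ q^{-3/2} $), but the conclusion that $ q^{-1} $ is not a pole stands.

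The one step you should tighten is the assertion that $ \rho_A^{I_{L_w}} $ ``carries a $ \QQ $-structure stable under the local Weil group.'' That is not literally true: what Grothendieck's theory gives is a two-step filtration of $ \br{V_\ell A}^{I_{L_w}} $ with toric graded piece $ X_*\br{T} \otimes \ZZ_\ell\br{1} $ and abelian graded piece the Tate module of an abelian variety over the residue field, and the rationality and $ \ell $-independence of $ \tr\br{\Fr_{K_v}^k \gamma \st \rho_A^{I_{L_w}}} $ for $ k \geq 1 $ is deduced from traces on the finite free $ \ZZ $-module $ X_*\br{T} $, the cyclotomic character, and traces of isogenies of abelian varieties over finite fields --- not from a global $ \QQ $-form of the $ \ell $-adic module. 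This is a known theorem and you have identified the right source, so it is a citation to sharpen rather than a gap; but as written the justification is the weakest link, exactly as you predicted.
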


It turns out that the same argument applies to Artin L-series.

\begin{theorem}
\label{thm:artin}
Let $ \tau $ be an Artin representation over $ K $. Then $ L^{\br{n}}\br{\tau, 1} \in \QQ\br{\tau} $ and $ L^{\br{n}}\br{\tau, 1}^\sigma = L^{\br{n}}\br{\tau^\sigma, 1} $ for any $ \sigma \in G_\QQ $, for all $ n \in \NN $.
\end{theorem}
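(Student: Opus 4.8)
The plan is to derive Theorem \ref{thm:artin} from two statements about the formal L-series $ \LLL\br{\tau, T} $, which by the Grothendieck--Lefschetz trace formula is a rational function over $ \overline{\QQ_\ell} $: that it already lies in $ \QQ\br{\tau}\br{T} $, and that applying $ \sigma $ to its coefficients produces $ \LLL\br{\tau^\sigma, T} $. Granting these, $ L^{\br{n}}\br{\tau, 1} $ is read off from $ \LLL\br{\tau, T} $ by differentiating and then specialising $ T $ to $ q^{-1} \in \QQ $, operations whose composition carries $ \QQ\br{\tau}\br{T} $ into $ \QQ\br{\tau} $ and commutes with $ \sigma $; so the theorem follows. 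The point requiring care here is the regularity of $ \LLL\br{\tau, T} $ at $ T = q^{-1} $: this is automatic once $ \tau $ has no trivial constituent, and the general case differs from that one only by a power of the zeta function $ \zeta_C $ of $ C $, which is defined over $ \QQ $.

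For the first statement, I would show that every Euler factor $ \LLL_{K_v}\br{\tau, T} $ lies in $ \QQ\br{\tau}[T] $. As $ \tau $ has finite image it factors through $ \Gal\br{L / K} $ for a finite Galois extension $ L / K $; let $ I_v \le \Gal\br{L / K} $ be the image of the inertia group at $ v $ and $ \phi_v \in \Gal\br{L / K} $ the image of $ \Fr_{K_v} $. The operator $ \abs{I_v}^{-1} \sum_{g \in I_v} \tau\br{g} $ is the projector onto $ \tau^{I_v} $ and commutes with $ \tau\br{\phi_v} $, so $ \tr\br{\Fr_{K_v}^k \st \tau^{I_v}} = \abs{I_v}^{-1} \sum_{g \in I_v} \tr\br{\tau\br{\phi_v^k g}} $ is a $ \QQ $-linear combination of values of $ \tr\br{\tau} $ and hence lies in $ \QQ\br{\tau} $; Newton's identities then place the coefficients of $ \LLL_{K_v}\br{\tau, T} $ in $ \QQ\br{\tau} $. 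Each coefficient of the formal product $ \prod_v \LLL_{K_v}\br{\tau, T^{\deg v}}^{-1} $ is a fixed polynomial over $ \ZZ $ in finitely many of these, so $ \LLL\br{\tau, T} \in \QQ\br{\tau}[[T]] $. Since a power series over a field that is a rational function over some extension field is rational already over that field --- its coefficient sequence then satisfies a linear recursion, and the solvability of the linear system defining such a recursion does not change under field extension --- we conclude $ \LLL\br{\tau, T} \in \QQ\br{\tau}\br{T} $.

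For the second statement I would again argue Euler factor by Euler factor, $ \sigma $ acting on coefficients and commuting with the operations defining $ \prod_v $. By the matrix description of $ \tau^\sigma $ recalled above, $ \tau $ and $ \tau^\sigma $ have the same kernel, hence factor through the same $ \Gal\br{L / K} $, so $ \tau^{I_v} $ and $ \br{\tau^\sigma}^{I_v} $ are cut out by the same projector; and $ \tr\br{\tau^\sigma\br{\phi_v^k g}} = \sigma\br{\tr\br{\tau\br{\phi_v^k g}}} $ by the identity $ \tr\br{\tau^\sigma} = \sigma \circ \tr\br{\tau} $. Newton's identities carry this to the coefficients, so $ \LLL_{K_v}\br{\tau^\sigma, T} = \LLL_{K_v}\br{\tau, T}^\sigma $ for every $ v $, whence $ \LLL\br{\tau^\sigma, T} = \LLL\br{\tau, T}^\sigma $.

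The most substantive input is the descent of rationality just used --- that a power series in $ \QQ\br{\tau}[[T]] $ which is rational over $ \overline{\QQ_\ell} $ is already rational over $ \QQ\br{\tau} $ --- together with the bookkeeping at $ T = q^{-1} $ that isolates the zeta-function contribution occurring exactly when $ \tau $ contains the trivial representation; everything else reduces to elementary character theory of the finite group $ \Gal\br{L / K} $. Finally, Theorem \ref{thm:artin} is the case ``$ \rho_A $ trivial'' of the mechanism behind Theorem \ref{thm:hasseweil}, and is the cleaner of the two: here the twisting representation has finite image globally, every Euler factor is governed by a finite group, and there are no bad-reduction phenomena to handle.
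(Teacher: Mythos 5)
Your proposal is correct, and its global skeleton coincides with the paper's: the paper deduces this theorem by specialising Theorem \ref{thm:global} to the trivial representation $ \rho = \mathbbm{1} $, i.e.\ coefficientwise control of each Euler factor, reassembly of the formal product degree by degree (the paper indexes this by effective Weil divisors, you by the finiteness of places of bounded degree --- the same computation), rationality over $ \overline{\QQ_\ell} $ from the Grothendieck--Lefschetz trace formula, and descent of rationality to $ \QQ\br{\tau} $ via the Hankel/linear-recursion criterion. Where you genuinely diverge is the local step. The paper's Proposition \ref{prop:local} realises $ \sigma $ as an automorphism of a local field by passing through the Weil--Deligne equivalence and choosing an auxiliary prime $ \ell' $ via Chebotarev, because it must handle $ \rho \otimes \tau $ with $ \rho $ of infinite image; you exploit the fact that for a pure Artin L-series everything factors through a finite quotient $ \Gal\br{L/K} $, so the projector $ \abs{I_v}^{-1}\sum_{g \in I_v}\tau\br{g} $ (which commutes with $ \tau\br{\phi_v} $ by normality of inertia in the decomposition group) and Newton's identities put the Euler factor in $ \QQ\br{\tau}\sbr{T} $ and give its $ \sigma $-equivariance by pure character theory. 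This is a real simplification available only in the Artin case; the paper's heavier local argument is what lets the same theorem statement cover $ \rho_A \otimes \tau $. You are also more explicit than the paper about the behaviour at $ T = q^{-1} $: the paper only remarks that evaluation is legitimate ``whenever the denominator does not vanish,'' whereas you isolate the pole as coming exactly from trivial constituents of $ \tau $ via $ \zeta_C $; strictly speaking, when $ \tau $ contains the trivial representation the value $ L^{\br{n}}\br{\tau, 1} $ does not exist and the statement should be read for the regular part or leading term, a caveat your write-up surfaces and the paper leaves implicit. (One further point of agreement with the paper's actual content: the derivatives should be taken in $ T $ and then specialised at $ T = q^{-1} $, as you do, since $ s $-derivatives would introduce powers of $ \log q $.)
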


\pagebreak

In what follows, a stronger result on the algebraicity of formal L-series will be proven, which clearly implies the same for all derivatives of L-series, by replacing $ T $ with $ q^{-s} $. To this end, for any field $ \FFFF $ with automorphism group $ G $, define an action of $ G $ on the ring of formal power series $ \FFFF[[T]] $ by
$$ \del[4]{\sum_{n = 0}^\infty a_nT^n}^g \coloneqq \sum_{n = 0}^\infty a_n^g T^n, \qquad g \in G. $$
Evaluating such a power series at an element $ f \in \FFFF $ does not give $ \sum_{n = 0}^\infty a_n^gf^n $ in general, due to potential convergence issues, but the following shows that it does whenever the power series happens to be a rational function.

\begin{lemma}
\label{lem:powerseries}
Let $ \FFFF $ be a field, and let $ P\br{T} \in \FFFF[[T]] $ be a power series such that $ P\br{T} = R\br{T} / Q\br{T} $ for some power series $ Q\br{T} \in \FFFF[[T]] $ and some polynomial $ R\br{T} \in \FFFF\sbr{T} $. Then $ P\br{T}^\sigma = R\br{T}^\sigma / Q\br{T}^\sigma $.
\end{lemma}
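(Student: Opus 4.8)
The plan is to deduce the identity from the observation that the coefficientwise action of $ \sigma $ on $ \FFFF[[T]] $ is not merely additive but a ring automorphism, and hence transports the relation underlying $ P\br{T} = R\br{T}/Q\br{T} $ to its $ \sigma $-twist.

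First I would make precise what $ P\br{T} = R\br{T}/Q\br{T} $ means: since $ \FFFF[[T]] $ is an integral domain and $ Q\br{T} \ne 0 $, this is equivalent to the equation $ P\br{T}\,Q\br{T} = R\br{T} $ holding in $ \FFFF[[T]] $, and this equation determines $ P\br{T} $ uniquely from $ R\br{T} $ and $ Q\br{T} $. (If moreover $ Q\br{T} $ is a unit, i.e.\ has nonzero constant term, the two formulations agree verbatim; but invertibility of $ Q\br{T} $ is not needed.) Next I would verify that the map $ \FFFF[[T]] \to \FFFF[[T]] $ sending $ \sum_n a_n T^n $ to $ \sum_n a_n^\sigma T^n $ is a ring automorphism: additivity is clear from the definition, and for multiplicativity one notes that the $ n $-th coefficient of a product of two power series is the \emph{finite} sum $ \sum_{i+j=n} a_i b_j $, so the ring homomorphism $ \sigma $ of $ \FFFF $ distributes over it to give $ \sum_{i+j=n} a_i^\sigma b_j^\sigma $, which is exactly the $ n $-th coefficient of the product of the $ \sigma $-images; bijectivity follows since $ \sigma^{-1} $ induces a two-sided inverse.

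With this in hand, applying the automorphism to $ P\br{T}\,Q\br{T} = R\br{T} $ yields $ P\br{T}^\sigma\,Q\br{T}^\sigma = R\br{T}^\sigma $, and since $ Q\br{T}^\sigma \ne 0 $ (as $ \sigma $ is injective on $ \FFFF $), this is precisely the assertion $ P\br{T}^\sigma = R\br{T}^\sigma/Q\br{T}^\sigma $ in the sense fixed above. The only step needing genuine, if small, care is the multiplicativity of the coefficientwise action, which rests on the convolution sum being finite; everything else is formal. I would also remark that the hypothesis that $ R\br{T} $ is a polynomial is irrelevant to the lemma itself — its role is to guarantee convergence when $ P $ is later evaluated at an element of $ \FFFF $, which is the purpose for which the lemma is being prepared.
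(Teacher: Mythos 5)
Your proof is correct and takes essentially the same route as the paper: both arguments reduce to the multiplicativity of the coefficientwise action of $\sigma$ on products of power series, which holds because each convolution sum $\sum_{i+j=n} P_i Q_j$ is finite. Your packaging of this as a ring automorphism of $\FFFF[[T]]$, and your observation that the polynomial hypothesis on $R\br{T}$ is needed only for the later evaluation at $T = q^{-1}$ rather than for the lemma itself, are accurate refinements of the same idea.
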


\begin{proof}
Since $ R\br{T} $ is a polynomial, it suffices to show that $ \br{P\br{T}Q\br{T}}^\sigma = P\br{T}^\sigma Q\br{T}^\sigma $. Let $ P_n $ and $ Q_n $ denote the coefficients of the power series $ P\br{T} $ and $ Q\br{T} $ respectively for all $ n \in \NN $, so that the equality becomes
$$ \sum_{n = 0}^\infty \del[4]{\sum_{i + j = n} P_iQ_j}^\sigma T^n = \sum_{n = 0}^\infty P_n^\sigma T^n \cdot \sum_{n = 0}^\infty Q_n^\sigma T^n. $$
This is clear since $ \sum_{i + j = n} P_iQ_j $ is a finite sum.
\end{proof}

This is a property inherited by the formal L-series of a general $ \ell $-adic representation over a global function field. Furthermore, if it has no geometric invariants, its formal L-series is in fact a polynomial.

\begin{proposition}
\label{prop:rationality}
Let $ \rho $ be an $ \ell $-adic representation over a global function field $ K = \FF_q\br{C} $ that is unramified almost everywhere. Then $ \LLL\br{\rho, T} \in \overline{\QQ_\ell}\br{T} $. Furthermore, if $ \rho^{G_K^g} = 0 $, then $ \LLL\br{\rho, T} \in \overline{\QQ_\ell}\sbr{T} $ of degree
$$ \deg\LLL\br{\rho, T} = \br{2g_C - 2}\dim\rho + \deg\ffff_\rho. $$
\end{proposition}

\begin{proof}
This follows the sketch of an argument in Ulmer's notes \cite[Lecture 4, Theorem 1.4.1]{Ulm11}, but it is repeated here with references to Milne's book. There is an equivalence of categories between continuous $ \ell $-adic representations over $ K $ that are unramified on an open set $ U $ of $ C $ and $ \ell $-adic sheaves that are lisse on $ U $ \cite[Chapter V, Section 1]{Mil80}. Let $ \iota : U \hookrightarrow C $ be any open set at which $ \rho $ is unramified, and let $ \FFF_\rho $ be its associated $ \ell $-adic sheaf that is lisse on $ U $, whose direct image along $ \iota $ induces \'etale cohomology groups $ H^i \coloneqq H_\et^i\br{\overline{C}, \iota_*\FFF_\rho} $ of the base change $ \overline{C} $ of $ C $ to $ \overline{\FF_q} $. Now the Grothendieck--Lefschetz trace formula for $ \ell $-adic sheaves \cite[Chapter VI, Theorem 13.4]{Mil80} says that for all $ n \in \NN $,
$$ \sum_{v \in C\br{\FF_{q^n}}} \tr\br{\Fr_{K_v}^n \st \rho^{I_{K_v}}} = \sum_{i = 0}^2 \br{-1}^i \cdot \tr\br{\Fr_q^n \st H^i}, $$

\pagebreak

\noindent where $ \Fr_q $ is the Frobenius in $ \FF_q $. Dividing both sides by $ n $ and exponentiating their generating functions, this equality rearranges to
$$ \prod_v \exp\sum_{m = 1}^\infty \tr\br{\Fr_{K_v}^m \st \rho^{I_{K_v}}}\dfrac{T^{m\deg v}}{m} = \prod_{i = 0}^2 \exp\del[4]{\sum_{n = 1}^\infty \tr\br{\Fr_q^n \st H^i}\dfrac{T^n}{n}}^{\br{-1}^i}. $$
An identity in linear algebra \cite[Chapter V, Lemma 2.7]{Mil80} shows that
$$ \exp\sum_{m = 1}^\infty \tr\br{\Fr_{K_v}^m \st \rho^{I_{K_v}}}\dfrac{T^{m\deg v}}{m} = \dfrac{1}{\det\br{1 - T^{\deg v} \cdot \Fr_{K_v} \st \rho^{I_{K_v}}}}, $$
for each place $ v $ of $ K $, and that
$$ \exp\sum_{n = 1}^\infty \tr\br{\Fr_q^n \st H^i}\dfrac{T^n}{n} = \dfrac{1}{\det\br{1 - T \cdot \Fr_q \st H^i}}. $$
for $ i = 0, 1, 2 $. Thus the left hand side becomes $ \LLL\br{\rho, T} $, while the right hand side expresses it as an alternating product of polynomials $ \det\br{1 - T \cdot \Fr_q \st H^i} $, which proves the first statement. For the second statement, note that
$$ H^0 = H_\et^0\br{\overline{U}, \FFF_\rho} \cong \rho^{G_K^g}, $$
by definition, and Poincar\'e duality \cite[Chapter V, Proposition 2.2(c)]{Mil80} gives
$$ H^2 \cong H_\et^0\br{\overline{C}, \iota_*\FFF_\rho^\vee\br{1}}^\vee = H_\et^0\br{\overline{U}, \FFF_\rho^\vee\br{1}}^\vee \cong \br{\rho^\vee\br{1}^{G_K^g}}^\vee = 0, $$
so that $ \LLL\br{\rho, T} \in \overline{\QQ_\ell}\sbr{T} $. Its degree is precisely given by the Grothendieck--Ogg--Shafarevich formula \cite[Chapter V, Theorem 2.12]{Mil80}.
\end{proof}

In particular, $ \LLL\br{\rho, T} $ is well-defined at $ T = q^{-1} $ and respects the action of automorphisms of $ \overline{\QQ_\ell} $ whenever its denominator does not vanish.

\begin{remark}
When $ \rho $ is the $ \ell $-adic representation associated to an elliptic curve, Shioda gave an alternative description of $ \LLL\br{\rho, T} $ in terms of its associated elliptic surface $ \EEE $ \cite[Theorem 4]{Shi92}. When $ \EEE $ has at least one singular fibre, he showed that $ \LLL\br{\rho, T} $ is in fact a polynomial, given by
$$ \LLL\br{\rho, T} = \det\br{1 - T \cdot \Fr_q \st W}, $$
where $ W $ is a subspace of the second $ \ell $-adic cohomology group $ H_\et^2\br{\EEE, \QQ_\ell\br{1}} $ of $ \EEE $, given as the orthogonal complement of the trivial sublattice of the Neron--Severi group $ \NS\br{\EEE} $ of $ \EEE $ under the cycle class map $ \NS\br{\EEE} \to H_\et^2\br{\EEE, \QQ_\ell\br{1}} $. His description has the added benefit that the degree and functional equation of the polynomial $ \LLL\br{\rho, T} $ can be understood directly from the geometry of $ \EEE $.
\end{remark}

The analogue of algebraicity and Galois equivariance can first be proven at the level of Euler factors of local $ \ell $-adic representations.

\pagebreak

\begin{proposition}
\label{prop:local}
Let $ \rho $ be an $ \ell $-adic representation over a non-archimedean local field $ F $, such that $ \LLL_F\br{\rho, T} $ has coefficients in $ \QQ $, and let $ \tau $ be an Artin representation over $ F $. Then $ \LLL_F\br{\rho \otimes \tau, T} \in \QQ\br{\tau}\sbr{T} $ and $ \LLL_F\br{\rho \otimes \tau, T}^\sigma = \LLL_F\br{\rho \otimes \tau^\sigma, T} $ for any $ \sigma \in G_\QQ $.
\end{proposition}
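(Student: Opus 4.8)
The plan is to reduce the whole statement to the single fact that $ \rho $ is isomorphic to each of its conjugates by field automorphisms. For an automorphism $ \sigma $ of $ \overline{\QQ_\ell} $ and an $ \ell $-adic representation $ \mu $ over $ F $, write $ \mu^\sigma $ for the representation obtained by applying $ \sigma $ entrywise to a matrix realisation of $ \mu $, exactly as $ \tau^\sigma $ is described in the Notation; one checks readily that $ \br{\mu \otimes \nu}^\sigma = \mu^\sigma \otimes \nu^\sigma $, since applying $ \sigma $ entrywise respects Kronecker products. First I would record that $ \LLL_F\br{\mu, T}^\sigma = \LLL_F\br{\mu^\sigma, T} $: realising $ \mu $ so that $ \mu^{I_F} $ is a coordinate subspace, the matrix of $ \Fr_F $ is block upper triangular, and the characteristic polynomial $ \det\br{1 - T \cdot \Fr_F \st \mu^{I_F}} $ is given by formulas with integer coefficients in the matrix entries, so both passing to inertia invariants and forming this polynomial commute with $ \sigma $. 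Consequently, once we know $ \rho^\sigma \cong \rho $ for every $ \sigma $ fixing $ \QQ $, then for $ \sigma \in G_\QQ $ extended to $ \overline{\QQ_\ell} $ through the fixed embedding $ \overline{\QQ} \hookrightarrow \overline{\QQ_\ell} $ we obtain
$$ \LLL_F\br{\rho \otimes \tau, T}^\sigma = \LLL_F\br{\br{\rho \otimes \tau}^\sigma, T} = \LLL_F\br{\rho^\sigma \otimes \tau^\sigma, T} = \LLL_F\br{\rho \otimes \tau^\sigma, T}, $$
which is the asserted Galois equivariance.

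For algebraicity I would run the same computation with an automorphism $ \sigma $ of $ \overline{\QQ_\ell} $ fixing $ \QQ\br{\tau} $. Then $ \tau^\sigma $ and $ \tau $ have the same character and are both semisimple, hence $ \tau^\sigma \cong \tau $, so $ \LLL_F\br{\rho \otimes \tau, T}^\sigma = \LLL_F\br{\rho \otimes \tau, T} $. As this holds for all such $ \sigma $, and the fixed field of $ \overline{\QQ_\ell} $ under them is precisely the number field $ \QQ\br{\tau} $, the coefficients of $ \LLL_F\br{\rho \otimes \tau, T} $ lie in $ \QQ\br{\tau} $, i.e. $ \LLL_F\br{\rho \otimes \tau, T} \in \QQ\br{\tau}\sbr{T} $.

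The substantive step — and the one I expect to be the real obstacle — is the input $ \rho^\sigma \cong \rho $. The assumption that $ \LLL_F\br{\rho, T} $ has coefficients in $ \QQ $ constrains only the action of $ \Fr_F $ on $ \rho^{I_F} $ and does not by itself determine $ \rho $ up to isomorphism, so it must be used in conjunction with what is genuinely available about $ \rho $: that its trace function $ g \mapsto \tr\br{\rho\br{g}} $ on $ W_F $ is $ \QQ $-valued and independent of $ \ell $ — for $ \rho = \rho_A $ this is the local shadow of the $ \ell $-independence of $ H_\et^1\br{A, \QQ_\ell} $ recorded in the Notation, and for $ \rho $ trivial it is immediate. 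From $ \QQ $-valued traces one gets $ \tr\br{\rho^\sigma\br{g}} = \sigma\br{\tr\br{\rho\br{g}}} = \tr\br{\rho\br{g}} $, which yields $ \rho^\sigma \cong \rho $ at once when $ \rho $ is semisimple; when it is not — by Grothendieck's monodromy theorem the failure of semisimplicity is only a unipotent part — one concludes the same after checking that this part is rigid under $ \sigma $, which reduces to a one-dimensional $ \mathrm{Ext}^1 $-group computation for unramified representations over $ F $. I would therefore spend the bulk of the argument on this verification (or, equivalently, phrase the hypothesis directly as rationality of the trace function of $ \rho $).
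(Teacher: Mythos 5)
Your reduction of both claims to the single input $\rho^\sigma \cong \rho$ is cleanly organised, and the surrounding bookkeeping --- $\br{\mu \otimes \nu}^\sigma \cong \mu^\sigma \otimes \nu^\sigma$, the compatibility $\LLL_F\br{\mu, T}^\sigma = \LLL_F\br{\mu^\sigma, T}$ via the block-triangular form of $\Fr_F$ relative to $\mu^{I_F}$, and the fixed-field argument for algebraicity --- is correct. But the argument stops exactly at its load-bearing step: $\rho^\sigma \cong \rho$ is never established, and you yourself observe that it cannot follow from the stated hypothesis, which constrains only $\Fr_F$ acting on $\rho^{I_F}$. Your suspicion is well founded: take $\tau$ a ramified finite-order character and $\rho = \chi \otimes \tau^{-1}$ with $\chi$ unramified and $\chi\br{\Fr_F} = \beta \in \ZZ_\ell^\times$ transcendental over $\QQ$; then $\LLL_F\br{\rho, T} = 1$ while $\LLL_F\br{\rho \otimes \tau, T} = 1 - \beta T$. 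So the proposition genuinely requires, and the paper's own proof silently uses, more about $\rho$ than its Euler factor --- namely that, after transfer to a suitable auxiliary prime $\ell'$, the representation $\rho$ is realised over $\QQ_{\ell'}$, as holds in the two intended applications $\rho = \rho_A$ and $\rho = \mathbbm{1}$. Replacing the hypothesis by $\QQ$-rationality of the trace function, as you propose, is a reasonable repair, but it proves a different proposition; and even granting it, your argument remains incomplete, since $\rho^{I_F}$ (unlike the invariants of the semisimplification) is sensitive to extension classes, so equality of characters does not by itself control the Euler factors of twists, and the deferred $\mathrm{Ext}^1$ verification is genuinely needed and not carried out.

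The paper avoids this difficulty entirely by never letting a discontinuous automorphism of $\overline{\QQ_\ell}$ act on $\rho$. It first reduces to $\sigma \in \Gal\br{\LLLL / \KKKK}$, where $\LLLL$ is a number field realising $\tau$, and then uses the Weil--Deligne correspondence to change $\ell$ to a prime $\ell'$, chosen by Chebotarev to split in $\KKKK$ and remain inert in $\LLLL$, so that $\sigma$ becomes a \emph{continuous} automorphism $\phi\br{\sigma}$ of the local field $\QQ_{\ell'}\br{\alpha}$ over $\QQ_{\ell'}$. Since $\phi\br{\sigma}$ fixes the coefficient field of $\rho$, the identity $\br{\rho \otimes \tau}^{\phi\br{\sigma}} \cong \rho \otimes \tau^\sigma$ is immediate, and no invariance of $\rho$ under wild field automorphisms is ever required. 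If you want to complete your route for the actual applications, the shortest path is to import exactly this change-of-prime device rather than to prove $\rho^\sigma \cong \rho$ for arbitrary $\sigma$.
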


\begin{proof}
This is similar to the argument by Bouganis--Dokchitser, but they only proved it for Artin twists of elliptic curves over number fields \cite[Lemma 4.4]{BD07}, so it is repeated here for reference. The first statement follows from the second statement since $ \tau^\sigma = \tau $ for any $ \sigma \in G_{\QQ\br{\tau}} $, so it suffices to prove the latter. Since $ \LLL_F\br{\rho \otimes \tau, T} $ has coefficients in $ \QQ\br{\tau} $, it suffices to prove it for $ \sigma \in \Gal\br{\LLLL / \KKKK} $, where $ \LLLL $ is the extension of $ \QQ\br{\tau} $ that realises $ \tau $ and $ \KKKK $ is its subfield fixed by $ \sigma $. There is an equivalence of categories between $ \ell $-adic representations over $ F $ and complex Weil--Deligne representations of $ W_F $ \cite[Section 8]{Del73}, so that $ \ell $ can be replaced with some prime $ \ell' $ that splits in $ \KKKK $ and remains inert in $ \LLLL $, which exists by Chebotarev's density theorem. This gives an isomorphism $ \phi : \Gal\br{\LLLL / \KKKK} \xrightarrow{\sim} \Gal\br{\QQ_{\ell'}\br{\alpha} / \QQ_{\ell'}} $, where $ \alpha $ is the image in $ \overline{\QQ_{\ell'}} $ of the primitive element of $ \KKKK $ that generates $ \LLLL $. Now let $ \br{v_i}_i $ be a basis of $ \br{\rho \otimes \tau}^{I_F} $ over $ \QQ_\ell $, and let $ \br{a_{ij}} $ be the matrix of $ \Fr_F $ with respect to this basis. Then $ \br{v_i^{\phi\br{\sigma}}}_i $ is a basis of $ \br{\rho \otimes \tau^\sigma}^{I_F} $ over $ \QQ_\ell $, and the matrix of $ \Fr_F $ with respect to this basis is $ \br{a_{ij}^{\phi\br{\sigma}}} $, so that its inverse characteristic polynomial is precisely that of $ \br{a_{ij}}^\sigma $.
\end{proof}

The corresponding statement for formal L-series follows from the local statements, by rewriting the infinite product of local Euler factors into a power series with coefficients indexed by effective Weil divisors, and applying rationality.

\begin{theorem}
\label{thm:global}
Let $ \rho $ be an $ \ell $-adic representation over a global function field $ K = \FF_q\br{C} $ that is unramified almost everywhere, such that $ \LLL_{K_v}\br{\rho, T} $ has coefficients in $ \QQ $ for each place $ v $ of $ K $, and let $ \tau $ be an Artin representation over $ K $. Then $ \LLL\br{\rho \otimes \tau, T} \in \QQ\br{\tau}\br{T} $ and $ \LLL\br{\rho \otimes \tau, T}^\sigma = \LLL\br{\rho \otimes \tau^\sigma, T} $ for any $ \sigma \in G_\QQ $. Furthermore, if $ \br{\rho \otimes \tau}^{G_K^g} = 0 $, then $ \LLL\br{\rho \otimes \tau, T} \in \QQ\br{\tau}\sbr{T} $ of degree
$$ \deg\LLL\br{\rho \otimes \tau, T} = \br{2g_C - 2}\dim\rho\dim\tau + \deg\ffff_{\rho \otimes \tau}. $$
\end{theorem}

\begin{proof}
For each place $ v $ of $ K $, let $ a_{v, n}\br{\tau} $ denote the coefficients of the power series $ \LLL_{K_v}\br{\rho \otimes \tau, T}^{-1} $ for all $ n \in \NN $. By Lemma \ref{lem:powerseries} for $ P\br{T} = \LLL_{K_v}\br{\rho \otimes \tau, T}^{-1} $, Proposition \ref{prop:local} translates into $ a_{v, n}\br{\tau} \in \QQ\br{\tau} $ and $ a_{v, n}\br{\tau}^\sigma = a_{v, n}\br{\tau^\sigma} $ for any $ \sigma \in G_\QQ $. Now for an effective Weil divisor $ D = \sum_v n_v\sbr{v} $ on $ C $, let $ a_D\br{\tau} $ denote the finitely-supported product $ \prod_v a_{v, n_v}\br{\tau} $, so that $ a_D\br{\tau} \in \QQ\br{\tau} $ and $ a_D\br{\tau}^\sigma = a_D\br{\tau^\sigma} $ for any $ \sigma \in G_\QQ $. A rearrangement gives
$$ \LLL\br{\rho \otimes \tau, T} = \prod_v \sum_{n = 0}^\infty a_{v, n}\br{\tau}T^{n\deg v} = \sum_{m = 0}^\infty \sum_D a_D\br{\tau}T^m, $$
where the sum ranges over effective Weil divisors $ D $ on $ C $ of degree precisely $ m $. This is a finite sum, so that $ \sum_D a_D\br{\tau} \in \QQ\br{\tau} $ and $ \br{\sum_D a_D\br{\tau}}^\sigma = \sum_D a_D\br{\tau^\sigma} $, which proves the second statement and that $ \LLL\br{\rho \otimes \tau, T} \in \QQ\br{\tau}[[T]] $. The first and final statements follow from Proposition \ref{prop:rationality} that $ \LLL\br{\rho \otimes \tau, T} \in \overline{\QQ_\ell}\br{T} $, using the theory of Hankel determinants \cite[Chapter IV.4, Exercise 1]{Bou03}.
\end{proof}

\pagebreak

In particular, these apply to $ \rho = \rho_A $, which proves Theorem \ref{thm:hasseweil}, but also to the trivial representation $ \rho = \mathbbm{1} $, which proves Theorem \ref{thm:artin}.

\begin{remark}
Using Proposition \ref{prop:rationality}, Burns--Kakde--Kim proves the algebraicity and Galois equivariance of $ L^{\br{n}}\br{A, \tau, s} $ up to finitely many local Euler factors away from an open set $ U $ of $ C $ \cite[Proposition 2.2]{BKK18}, by directly arguing that the action of $ \Fr_q $ is preserved under an isomorphism
$$ H_{\et, c}^i\br{\overline{U}, \FFF_{\rho_A} \otimes \FFF_\tau^\sigma} \cong H_{\et, c}^i\br{\overline{U}, \FFF_{\rho_A} \otimes \FFF_\tau}^\sigma, $$
for any $ \sigma \in G_\QQ $, where both sides are compactly-supported \'etale cohomology groups of the base change $ \overline{U} $ of $ U $ to $ \overline{\FF_q} $. The remaining finitely many local Euler factors can be handled separately by Proposition \ref{prop:local}, which gives an alternative proof for Theorem \ref{thm:hasseweil} independent from Theorem \ref{thm:global}.
\end{remark}

\begin{remark}
There are explicit bounds for $ \deg\ffff_{\rho \otimes \tau} $ in terms of $ \deg\ffff_\rho $ and $ \deg\ffff_\tau $, such as in the arguments of Bisatt--Paterson \cite[Section 2]{BP23}, so the computation of $ \deg\LLL\br{\rho \otimes \tau, T} $ generalises that by Comeau-Lapointe--David--Lalin--Li for Dirichlet twists of elliptic curves \cite[Theorem 2.2]{CLDLL22}.
\end{remark}

The expression for $ \deg\LLL\br{\rho \otimes \tau, T} $ in Theorem \ref{thm:global} is useful for computing formal L-series of Dirichlet twists of elliptic curves, which was done explicitly by Comeau-Lapointe--David--Lalin--Li using the functional equation \cite[Section 5.1]{CLDLL22}. The reader is referred to Ulmer's notes \cite[Lecture 1]{Ulm11} and Rosen's book \cite[Chapter 4 and Chapter 9]{Ros02} for the general theory over global function fields of elliptic curves and Dirichlet characters respectively.

\begin{example}
Let $ K = \FF_{11}\br{t} $, and let $ A $ be the elliptic curve over $ K $ given by $ Y^2 = X^3 + \br{t + 1}^3\br{t + 2}^3 $. Consider the Dirichlet character $ \tau $ over $ K $ of modulus $ t $ given by $ 2 \mapsto \zeta_5 $ and the automorphism $ \sigma \in \Gal\br{\QQ\br{\zeta_5} / \QQ} $ given by $ \zeta_5 \mapsto \zeta_5^2 $. To verify Theorem \ref{thm:hasseweil} that $ \LLL\br{A, \tau, T}^\sigma = \LLL\br{A, \tau^\sigma, T} $, first compute
$$ \LLL\br{A, \tau, T} = 1 + 11\br{5 + 12\zeta_5 + 5\zeta_5^2}T^2 + 14641\zeta_5^2T^4. $$
Since $ \tau^\sigma = \sigma \circ \tau $ is given by $ 2 \mapsto \zeta_5^2 $, separately compute
$$ \LLL\br{A, \tau^\sigma, T} = 1 + 11\br{5 + 12\zeta_5^2 + 5\zeta_5^4}T^2 + 14641\zeta_5^4T^4. $$
These were computed in Magma, but the same algorithm works in any software with support for irreducible polynomials over finite fields. Note that $ \ffff_{\rho_A} = 2\sbr{t + 1} + 2\sbr{t + 2} $ and $ \ffff_\tau = \sbr{t} + \sbr{1 / t} $ have disjoint support, so that $ \br{\rho_A \otimes \tau}^{I_{K_t}} = 0 $. In particular, $ \br{\rho_A \otimes \tau}^{G_K^g} = 0 $, so that $ \LLL\br{A, \tau, T} \in \QQ\br{\zeta_5}\sbr{T} $ of degree
$$ \deg\LLL\br{A, \tau, T} = \br{2g_{\PP^1} - 2}\dim\rho_A\dim\tau + \deg\ffff_\tau\dim\rho_A + \deg\ffff_{\rho_A}\dim\tau = 4. $$
Thus $ \LLL\br{A, \tau, T} $ is completely determined by $ \LLL_{K_v}\br{A, \tau, T} $ for all places $ v $ of $ K $ with $ \deg v \le 4 $, where $ \LLL_{K_v}\br{A, \tau, T} = 1 $ for $ v \in \cbr{1 / t, t, t + 1, t + 2} $ and
$$ \LLL_{K_v}\br{A, \tau, T} = 1 - \tr\br{\Fr_{K_v} \st \rho_A}\tau\br{v}T + 11^{\deg v}\tau\br{v}^2T^2, $$
for all other places $ v $ of $ K $. For instance, if $ v = t^4 + t + 2 $, then $ \tr\br{\Fr_{K_v} \st \rho_A} = -242 $ and $ \tau\br{v} = \zeta_5 $, so that $ \LLL_{K_v}\br{A, \tau, T} = 1 + 242\zeta_5T + 14641\zeta_5^2T^2 $.
\end{example}

\pagebreak

\begin{remark}
The existence of a globally compatible functional equation can drastically cut down the number of computations of local Euler factors necessary to completely determine the formal L-series. This in turn involves computing Langlands--Deligne local constants, which will not be explored in this paper.
\end{remark}

\def\abstractname{Acknowledgements}

\begin{abstract}
I would like to thank Vladimir Dokchitser for guidance throughout. This work was supported by the Engineering and Physical Sciences Research Council [EP/S021590/1], the EPSRC Centre for Doctoral Training in Geometry and Number Theory (The London School of Geometry and Number Theory), University College London.
\end{abstract}

\bibliography{main}

\end{document}